\documentclass[a4paper,12pt,final]{amsart}
\usepackage{times,a4wide,mathrsfs,amssymb,amsmath,amsthm,wasysym}

\newcommand{\C}{\mathbb{C}}

\newcommand{\QQ}{\mathbb{Q}}

\newcommand{\PP}{\mathbb{P}}

\newcommand{\oo}{\mathfrak o}

\newcommand{\EE}{\mathcal E}

\newcommand{\gr}{\hbox{Gr}}

\newcommand{\rom}{\romannumeral}

 \makeatletter
\newcommand*{\da@rightarrow}{\mathchar"0\hexnumber@\symAMSa 4B }
\newcommand*{\da@leftarrow}{\mathchar"0\hexnumber@\symAMSa 4C }
\newcommand*{\xdashrightarrow}[2][]{%
  \mathrel{%
    \mathpalette{\da@xarrow{#1}{#2}{}\da@rightarrow{\,}{}}{}%
  }%
}
\newcommand{\xdashleftarrow}[2][]{%
  \mathrel{%
    \mathpalette{\da@xarrow{#1}{#2}\da@leftarrow{}{}{\,}}{}%
  }%
}
\newcommand*{\da@xarrow}[7]{%
  \sbox0{$\ifx#7\scriptstyle\scriptscriptstyle\else\scriptstyle\fi#5#1#6\m@th$}%
  \sbox2{$\ifx#7\scriptstyle\scriptscriptstyle\else\scriptstyle\fi#5#2#6\m@th$}%
  \sbox4{$#7\dabar@\m@th$}%
  \dimen@=\wd0 %
  \ifdim\wd2 >\dimen@
    \dimen@=\wd2 %
  \fi
  \count@=2 %
  \def\da@bars{\dabar@\dabar@}%
  \@whiledim\count@\wd4<\dimen@\do{%
    \advance\count@\@ne
    \expandafter\def\expandafter\da@bars\expandafter{%
      \da@bars
      \dabar@ 
    }%
  }%
  \mathrel{#3}%
  \mathrel{%
    \mathop{\da@bars}\limits
    \ifx\\#1\\%
    \else
      _{\copy0}%
    \fi
    \ifx\\#2\\%
    \else
      ^{\copy2}%
    \fi
  }%
  \mathrel{#4}%
}
\makeatother

\newtheorem{theorem}{Theorem}[section]

\newtheorem{lemma}[theorem]{Lemma}

\newtheorem{corollary}[theorem]{Corollary}
\newtheorem{proposition}[theorem]{Proposition}
\newtheorem{conjecture}[theorem]{Conjecture}
\newtheorem{remark}[theorem]{Remark}
\newtheorem{definition}[theorem]{Definition}
\newtheorem{convention}{Conventions}

\newtheorem{nonumberingc}{Corollary}

\newtheorem{nonumberingp}{Proposition}

\newtheorem{nonumberingt}{Acknowledgements}

\begin{document}
\author[Robert Laterveer]
{Robert Laterveer}

\address{Institut de Recherche Math\'ematique Avanc\'ee,
CNRS -- Universit\'e 
de Strasbourg,\
7 Rue Ren\'e Des\-car\-tes, 67084 Strasbourg CEDEX,
FRANCE.}
\email{robert.laterveer@math.unistra.fr}

\title{A remark on the Chow ring of some hyperk\"ahler fourfolds}

\begin{abstract} Let $X$ be a hyperk\"ahler variety. Voisin has conjectured that the classes of Lagrangian constant cycle subvarieties in the Chow ring of $X$ should lie in a subring injecting into cohomology.
We study this conjecture for the Fano variety of lines on a very general cubic fourfold. 
 \end{abstract}

\keywords{Algebraic cycles, Chow groups, motives, Bloch--Beilinson filtration, hyperk\"ahler varieties, Fano variety of lines on cubic fourfold, Beauville's splitting principle, multiplicative Chow--K\"unneth decomposition, spread of algebraic cycles}
\subjclass[2010]{Primary 14C15, 14C25, 14C30.}

\maketitle

\section{Introduction}

For a smooth projective variety $X$ over $\C$, let $A^i(X):=CH^i(X)_{\QQ}$ denote the Chow groups (i.e. the groups of codimension $i$ algebraic cycles on $X$ with $\QQ$--coefficients, modulo rational equivalence). 
Let $A^i_{hom}(X)$ denote the subgroup of homologically trivial cycles. 

As is well--known, the world of Chow groups is still largely shrouded in mystery, its map containing vast unexplored regions only vaguely sketched in by conjectures \cite{B}, \cite{J2}, \cite{J4}, \cite{Kim}, \cite{Mur}, \cite{Vo}, \cite{MNP}. One region on this map that holds particular interest is that of hyperk\"ahler varieties (i.e. projective irreducible holomorphic symplectic manifolds \cite{Beau1}, \cite{Beau0}). 
Here, motivated by results for $K3$ surfaces and for abelian varieties, in recent years significant progress has been made in the understanding of Chow groups \cite{BV}, \cite{V13}, \cite{V14}, \cite{V17}, \cite{SV}, \cite{V6}, \cite{Rie}, \cite{Rie2}, \cite{LFu2}, \cite{Lin}, \cite{Lin2}, \cite{FTV}.

It is expected that for a hyperk\"ahler variety $X$, the Chow groups split in a finite number of pieces
    \[ A^i(X) = \bigoplus_j A^i_{(j)}(X)\ ,\]
  such that $A^\ast_{(\ast)}(X)$ is a bigraded ring and $A^\ast_{(0)}(X)$ injects into cohomology. This was first conjectured by Beauville \cite{Beau3}, who conjectured more precisely that the piece $A^i_{(j)}(X)$ should be isomorphic to the graded
  $\gr^j_F A^i(X)$ for the conjectural Bloch--Beilinson filtration.
    
  What kind of cycles are contained in the subring $A^\ast_{(0)}(X)$ ? Certainly divisors and the Chern classes of $X$ should be in this subring. In addition to this, Voisin has stated the following conjecture:

 \begin{conjecture}[Voisin \cite{V14}]\label{conj2} Let $X$ be a hyperk\"ahler variety of dimension $2m$. 
 
 \noindent
 (\rom1)
 Let $Y\subset X$ be a Lagrangian constant cycle subvariety (i.e., $\dim Y=m$ and the pushforward map $A_0(Y)\to A_0(X)$ has image of dimension $1$). Then
   \[   Y\ \ \ \in\ A^m_{(0)}(X)\ .\]
   
  \noindent
  (\rom2) 
   The subring of $A^\ast(X)$ containing divisors, Chern classes and Lagrangian constant cycle subvarieties injects into cohomology.
    \end{conjecture}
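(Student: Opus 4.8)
The plan is to deduce the statement from the general Beauville--Voisin splitting framework, treating part (\rom2) as a formal consequence of part (\rom1) and isolating (\rom1) as the genuinely new point. Throughout I assume the conjectural package underlying the splitting $A^i(X)=\bigoplus_j A^i_{(j)}(X)$: namely, that the hyperk\"ahler variety $X$ admits a multiplicative Chow--K\"unneth decomposition $\{\pi_i\}$, so that $A^i_{(j)}(X):=(\pi_{2i-j})_\ast A^i(X)$ defines a bigraded ring, and that $A^\ast_{(0)}(X)$ injects into $H^\ast(X)$ (the Beauville conjecture, under which $A^i_{(j)}(X)\cong\gr^j_F A^i(X)$).

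Granting this, divisors automatically lie in $A^1_{(0)}(X)$, since $A^1_{hom}(X)=0$ for a hyperk\"ahler variety (because $H^1(X)=0$), and the Chern classes of $X$ lie in $A^\ast_{(0)}(X)$ for any self-dual MCK decomposition. Consequently part (\rom2) reduces to part (\rom1): once Lagrangian constant cycle subvarieties are known to lie in $A^m_{(0)}(X)$, the subring they generate together with divisors and Chern classes is contained in the subring $A^\ast_{(0)}(X)$, which by hypothesis injects into cohomology. The heart of the matter is therefore part (\rom1).

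For part (\rom1), write $Y=\sum_{j\ge0}[Y]_{(j)}$ with $[Y]_{(j)}\in A^m_{(j)}(X)$. Since the cohomological projector attached to $\pi_{2m-j}$ sends the class $[Y]\in H^{2m}(X)$ into $H^{2m-j}(X)$, the components with $j\ge1$ are homologically trivial, and the whole problem is to show they vanish in the Chow group. The strategy is to use the constant cycle hypothesis to control the action of the projectors $\pi_{2m-j}$ on $[Y]$: the condition that $A_0(Y)\to A_0(X)$ has one-dimensional image should translate, via a Bloch-type argument on $Y\times Y$, into a factorisation of the relevant self-correspondences through the canonical zero-cycle $\oo$, forcing the deepest component $[Y]_{(2m)}$ to be a multiple of a constant cycle class and hence to sit in $A^m_{(0)}(X)$. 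Under the Bloch--Beilinson heuristic $A^m_{(j)}(X)\cong\gr^j_F A^m(X)$, the Lagrangian condition — the vanishing of the holomorphic symplectic form on $Y$ — is what is expected to constrain the intermediate components, while the constant cycle condition governs the extremal one.

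The main obstacle is precisely the passage from this cohomological vanishing back to vanishing in the Chow group: this is a form of the Bloch--Beilinson conjecture and is open in general. Even granting the MCK decomposition and the injectivity of $A^\ast_{(0)}(X)$, deducing $[Y]_{(j)}=0$ for $j\ge1$ from the constant cycle hypothesis alone seems to require \emph{spreading} $Y$ out in an algebraic family and extracting constraints from the geometry of the total space, in the manner of Voisin's spread technique. Such a family of pairs $(X,Y)$ is available only for special classes of hyperk\"ahler varieties — most notably the Fano variety of lines on a cubic fourfold, where $X$ deforms in the universal family of smooth cubics — and it is for this reason that I expect the conjecture for arbitrary $X$ of dimension $2m$ to remain out of reach by present methods, so that one is forced to establish it under additional hypotheses on $X$ or for specific families.
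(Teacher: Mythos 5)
The statement you were set is a \emph{conjecture} (Voisin's), and the paper offers no proof of it; the only benchmark available is whether your reading of its status, and your partial reductions, match what the paper actually does. On the two points where comparison is possible you do well. Your reduction of part (\rom2) to part (\rom1) is exactly the paper's parenthetical remark following the conjecture: (\rom2) follows from (\rom1) provided $A^\ast_{(0)}(X)\cap A^\ast_{hom}(X)=0$, which is expected from Bloch--Beilinson but not known. Note, however, that you fold the injectivity of $A^\ast_{(0)}(X)$ into cohomology into your standing hypotheses, whereas the paper is careful to flag that even in the one case it treats unconditionally (the Fano variety of lines on a very general cubic fourfold) this is unknown in codimension $2$ --- which is precisely why its proposition \ref{cc} does \emph{not} settle conjecture (\rom2), only the weak corollaries \ref{cc1}, \ref{cc2}, \ref{cc3}. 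Your diagnosis that the general case is out of reach and that one must specialize to families such as the Fano variety of lines also matches the paper's stated aim.

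Where you diverge is in the mechanism you sketch for the tractable case. You gesture at Voisin-style spread arguments over the universal family of cubics and at a ``Bloch-type factorisation on $Y\times Y$''; the paper's actual proof of the special case (proposition \ref{cc}) uses neither. It works entirely inside the Shen--Vial Fourier decomposition: writing $Y=b_0+b_2$ with $b_j\in A^2_{(j)}(X)$, it first shows $Y\cdot A^2(X)\subset A^4_{(0)}(X)$ --- the constant cycle hypothesis makes $Y\cdot A^2(X)$ one-dimensional, and the cohomological proportionality $Y=\lambda c$ with $\lambda\neq 0$ (from the coisotropy computation: for very general $Z$ the Hodge classes in $H^4(X)$ are spanned by $g^2$ and $c$), combined with $c\cdot A^2_{hom}(X)=0$ and $c^2=27\,\oo_X$, exhibits $Y\cdot c$ as a nonzero element of $A^4_{(0)}(X)$. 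Then, multiplying by the distinguished class $\ell\in A^2_{(0)}(X)$ and using that $A^\ast_{(\ast)}(X)$ is a bigraded ring for very general $Z$, one gets $b_2\cdot\ell\in A^4_{(2)}(X)$ while also $Y\cdot\ell\in A^4_{(0)}(X)$, whence $b_2\cdot\ell=0$; the Shen--Vial isomorphism $\cdot\,\ell\colon A^2_{(2)}(X)\xrightarrow{\cong}A^4_{(2)}(X)$ then forces $b_2=0$. So the heavy machinery you anticipate is replaced by a short closed intersection-theoretic argument whose only deep inputs are the Shen--Vial multiplicativity results and the two-dimensionality of $N^2H^4(X)$ for very general $Z$. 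As written, your proposal contains no completed argument for any case, so it cannot be graded as a proof of anything; but its structural reading of the conjecture --- what is formal, what is open, and where unconditional progress is possible --- is accurate.
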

    
  (NB: part (\rom2) follows from part (\rom1), provided the bigrading $A^\ast_{(\ast)}(X)$ has the desirable property that $A^\ast_{(0)}(X)\cap A^\ast_{hom}(X)=0$, which is expected from the Bloch--Beilinson conjectures.)  
 
Evidence for conjecture \ref{conj2} is presented in \cite{V14}. The modest aim of this note is to determine how far conjecture \ref{conj2} can be solved unconditionally in the special case where $X$ is the Fano variety of lines on a cubic fourfold. Here, the {\em Fourier decomposition\/} of Shen--Vial \cite{SV} provides an unconditional splitting $A^\ast_{(\ast)}(X)$ of the Chow ring.
The main result is as follows:

\begin{nonumberingp}[=proposition \ref{cc}] Let $Z\subset\PP^5(\C)$ be a very general smooth cubic fourfold, and let $X$ be the Fano variety of lines in $Z$. Assume
 $Y\subset X$ is a Lagrangian constant cycle subvariety. Then 
   \[ Y\in A^2_{(0)}(X)\ \]
   (where $A^\ast_{(\ast)}(X)$ denotes the Fourier decomposition of \cite{SV}).
  \end{nonumberingp}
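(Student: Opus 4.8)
The plan is to work entirely inside the Shen--Vial Fourier decomposition $A^2(X)=A^2_{(0)}(X)\oplus A^2_{(2)}(X)$, in which $A^2_{(0)}(X)$ injects into cohomology and the complementary piece $A^2_{(2)}(X)=A^2_{hom}(X)$ is the deepest (``transcendental'') part of the filtration. Writing the class accordingly as $[Y]=[Y]_{(0)}+[Y]_{(2)}$, the statement $Y\in A^2_{(0)}(X)$ is \emph{equivalent} to the vanishing of the single component $[Y]_{(2)}\in A^2_{(2)}(X)$. So the whole problem is to prove $[Y]_{(2)}=0$.

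First I would feed the constant cycle hypothesis into intersection theory. The zero--cycle $[Y]\cdot g^2\in A^4(X)=A_0(X)$ is represented by the restriction $g^2\vert_Y$, hence is supported on $Y$; since $A_0(Y)\to A_0(X)$ has one--dimensional image $\QQ\, c$ (with $c$ the common class of the points of $Y$, of degree $1$), we get $[Y]\cdot g^2=d\,c$ with $d=\int_Y g^2$. On the other hand $g^2\in A^2_{(0)}(X)$, so by the multiplicativity of the Fourier decomposition one has $[Y]_{(0)}\cdot g^2\in A^4_{(0)}(X)$ and $[Y]_{(2)}\cdot g^2\in A^4_{(2)}(X)$, and there is no weight--$4$ contribution. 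Comparing the two expressions for $[Y]\cdot g^2$ first forces $c$ to have no $A^4_{(4)}$--part, so $c=o_X+c_{(2)}$, and then gives $[Y]_{(2)}\cdot g^2=d\,c_{(2)}$. Now the (motivic) hard Lefschetz isomorphisms established by Shen--Vial identify cup--product with $g^2$ as an isomorphism $A^2_{(2)}(X)\xrightarrow{\sim}A^4_{(2)}(X)$; hence $[Y]_{(2)}=0$ if and only if $[Y]_{(2)}\cdot g^2=0$, i.e. if and only if $c_{(2)}=0$. In other words, the proposition is now reduced to the clean assertion that the common point--class of $Y$ is the canonical Beauville--Voisin cycle, $c=o_X$.

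The \emph{main obstacle} is exactly this last step, because $c_{(2)}$ lives in the infinite--dimensional transcendental group $A^4_{(2)}(X)$ and cannot be reached by intersection theory alone; this is where the hypothesis that $Z$ is very general must enter, through a spreading--out argument in the sense of Voisin. I would spread $Y$ over a dense open $B$ of the family of smooth cubic fourfolds, obtaining a relative (generically constant cycle) surface $\YY\subset\XX\to B$ with $\YY_b=Y$, together with its relative point--class. Through the Beauville--Donagi isomorphism of transcendental motives $\mathfrak h^2_{tr}(X)\cong\mathfrak h^4_{tr}(Z)(1)$, realised by the incidence correspondence $P=\{(l,x):x\in l\}\subset X\times Z$, the component $c_{(2)}$ is controlled by the variation of Hodge structure of $H^4(Z_b)$. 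The constant cycle condition forces the associated spread class to be a monodromy--invariant section of the transcendental local system; since for a very general cubic fourfold the monodromy on $H^4_{tr}(Z_b)$ is as large as possible, this local system carries no nonzero global invariant, and the section must vanish. This yields $c_{(2)}=0$, hence $[Y]_{(2)}=0$.

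Two remarks on where the difficulty really sits. The Lagrangian hypothesis is used only to guarantee the correct dimension $\dim Y=2$, so that $[Y]$ genuinely lands in $A^2(X)$; the vanishing argument itself uses the constant cycle property. And the delicate point is not the formal reductions of the first two paragraphs but the spreading step: one must check that the constant cycle property survives in the spread family and that the resulting object is indeed captured by the transcendental local system, so that the monodromy (equivalently, a Bloch--Srinivas type decomposition expressing that $Y$ supports only a one--dimensional space of zero--cycles) can be invoked to kill it. I expect this to be the crux of the argument.
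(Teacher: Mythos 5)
Your two reductive paragraphs are essentially sound, and in fact they reproduce the skeleton of the paper's own proof: decompose $Y=b_0+b_2$ in $A^2_{(0)}(X)\oplus A^2_{(2)}(X)$, intersect with a degree--$2$ class lying in $A^2_{(0)}(X)$, use multiplicativity of the Fourier decomposition (valid for very general $Z$) together with a Shen--Vial hard Lefschetz isomorphism to conclude that $b_2=0$ if and only if the common point class of $Y$ is the distinguished class $\oo_X$. Two corrections, though. First, the isomorphism Shen--Vial actually prove is $\cdot l\colon A^2_{(2)}(X)\xrightarrow{\cong}A^4_{(2)}(X)$ for their distinguished class $l$ (theorem \ref{mult}), not for $g^2$; transferring it to $g^2$ would require a relation between $g^2$, $l$ and $c$ at the level of Chow groups, which cannot simply be pulled back from cohomology precisely because $A^2_{(0)}(X)\cap A^2_{hom}(X)$ is not known to vanish. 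Running your argument with $l$ in place of $g^2$ removes this issue at no cost. Second, your opening assertions that $A^2_{(0)}(X)$ injects into cohomology and that $A^2_{(2)}(X)=A^2_{hom}(X)$ are exactly what is \emph{not} known (this is the ``troublesome part'' the paper is careful about); fortunately your reduction only uses the direct sum decomposition, not these claims.

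The genuine gap is the crux step you yourself flag: the claimed vanishing of $c_{(2)}$ by spreading and monodromy. A monodromy argument on the transcendental local system can only constrain \emph{cohomological} invariants; but $c_{(2)}\in A^4_{(2)}(X)\subset A^4_{hom}(X)$ is homologically trivial, so the ``section'' it would induce in any such local system is zero for trivial reasons, and its vanishing carries no information whatsoever about rational equivalence. Your sketch never produces an actual rational equivalence, and invoking a ``Bloch--Srinivas type decomposition'' does not supply one. The paper closes exactly this gap by intersection theory with the right test class, namely $c=c_2(\EE_2)$ of lemma \ref{lem}: by Voisin's degeneration argument $c$ is represented by a constant cycle (indeed rational) surface $Y_0$, so $\cdot c$ annihilates $A^2_{hom}(X)$; moreover $c\in A^2_{(0)}(X)$ and $c^2=27\,\oo_X$ in $A^4(X)$ \cite{SV}. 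By proposition \ref{katia} --- and this is where the Lagrangian (coisotropic) hypothesis genuinely enters, contrary to your remark that it only fixes the dimension, and where ``very general'' is used via $\dim N^2H^4(X)=2$ --- one has $Y=\lambda c$ in $H^4(X)$ with $\lambda\neq 0$, whence $Y\cdot c=\lambda c^2=27\lambda\,\oo_X\neq 0$ in $A^4_{(0)}(X)$. Since $Y$ is a constant cycle subvariety, $Y\cdot A^2(X)$ is one--dimensional and spanned by the point class of $Y$; the displayed computation therefore forces that point class to equal $\oo_X$, which is precisely the statement ($c_{(2)}=0$) that your proposal reduced to but left unproved. With that in hand, your concluding step (intersect with $l$ and apply theorem \ref{mult}) finishes the proof exactly as in the paper.
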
 
  
  This doesn't settle conjecture \ref{conj2}(\rom2) (because it is not known whether $A^2_{(0)}(X)\cap A^2_{hom}(X)=0$). However, this at least implies some statements along the lines of conjecture \ref{conj2}(\rom2):
  
 \begin{nonumberingc}[=corollaries \ref{cc1} and \ref{cc2}] Let $Z\subset\PP^5(\C)$ be a very general smooth cubic fourfold, and let $X$ be the Fano variety of lines in $Z$. 
 
 \noindent
 (\rom1)
 Let $a\in A^3(X)$ be a $1$--cycle of the form
   \[ a = \displaystyle\sum_{i=1}^r  Y_i\cdot D_i\ \ \ \in A^3(X)\ ,\]
   where $Y_i$ is a Lagrangian constant cycle subvariety and $D_i\in A^1(X)$. Then $a$ is rationally trivial if and only if $a$ is homologically trivial.  
   
  \noindent
  (\rom2) Let $a\in A^4(X)$ be a $0$--cycle of the form
   \[ a = \displaystyle\sum_{i=1}^r  Y_i\cdot b_i\ \ \ \in A^4(X)\ ,\]
   where $Y_i$ is a Lagrangian constant cycle subvariety and $b_i\in A^2(X)$. Then $a$ is rationally trivial if and only if $a$ is homologically trivial.
  \end{nonumberingc}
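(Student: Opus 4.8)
The plan is to treat both statements by the same two-step template: use the Proposition together with the fact (Shen--Vial) that the Fourier decomposition $A^\ast_{(\ast)}(X)$ is a bigraded ring to place the cycle $a$ in the summand $A^\ast_{(0)}(X)$, and then invoke the unconditional injectivity of the cycle class map on the relevant piece. In both cases the implication ``rationally trivial $\Rightarrow$ homologically trivial'' is automatic, so only the converse needs an argument.

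For (\rom1), since $X$ is hyperk\"ahler one has $A^1_{hom}(X)=\Pic^0(X)\otimes\QQ=0$, so $A^1(X)=A^1_{(0)}(X)$ and in particular every $D_i\in A^1_{(0)}(X)$. By the Proposition each $Y_i\in A^2_{(0)}(X)$, whence the bigraded ring structure gives $Y_i\cdot D_i\in A^3_{(0)}(X)$ and therefore $a\in A^3_{(0)}(X)$. It then remains to use that the cycle class map is injective on $A^3_{(0)}(X)$; I would read this off from the Shen--Vial theory, where $A^3_{(0)}(X)$ is the Fourier-dual counterpart of $A^1_{(0)}(X)$, whose injectivity into $H^2(X)$ is automatic. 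Granting this, $a$ homologically trivial forces $a=0$.

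For (\rom2) the naive route fails: decomposing $b_i=b_i'+b_i''$ along $A^2(X)=A^2_{(0)}(X)\oplus A^2_{(2)}(X)$, the ring structure only gives $Y_i\cdot b_i\in A^4_{(0)}(X)\oplus A^4_{(2)}(X)$, and $A^4_{(2)}(X)$ is homologically trivial but not known to vanish. The way around this is to exploit the constant cycle hypothesis directly. Let $g\in A^1(X)$ be the Pl\"ucker polarization. By the projection formula both $Y_i\cdot b_i$ and $Y_i\cdot g^2$ are pushforwards to $X$ of $0$-cycles on $Y_i$, hence lie in the image of $A_0(Y_i)\to A_0(X)$, which is one-dimensional because $Y_i$ is a constant cycle subvariety. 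Since $g$ is ample and $\dim Y_i=2$ we have $\deg(Y_i\cdot g^2)>0$, so $Y_i\cdot g^2\neq 0$ and $Y_i\cdot b_i$ is a rational multiple of it. As $g^2\in A^2_{(0)}(X)$ and $Y_i\in A^2_{(0)}(X)$, we get $Y_i\cdot g^2\in A^4_{(0)}(X)=\QQ\,\oo_X$, hence $a\in A^4_{(0)}(X)$. Since the degree map identifies $A^4_{(0)}(X)$ with $H^8(X)$, homological triviality of $a$ yields $a=0$.

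The only non-formal ingredient is the injectivity of $A^3_{(0)}(X)$ into $H^6(X)$ used in (\rom1), and the point is that this lies at the ``easy'' end of the Fourier duality, opposite to $A^1_{(0)}(X)$, rather than at the self-dual middle codimension. The genuinely open statement is $A^2_{(0)}(X)\cap A^2_{hom}(X)=0$, which is exactly what blocks a full proof of Conjecture \ref{conj2}(\rom2) and which --- and this is the merit of the argument for (\rom2) --- neither part of the corollary needs: the constant cycle property collapses each product $Y_i\cdot b_i$ onto the line $\QQ\,\oo_X$, so that no injectivity beyond the trivial one in codimension $4$ is ever invoked. I would expect this collapsing trick to be the crux in any attempt to bypass the unknown injectivity of $A^2_{(0)}(X)$.
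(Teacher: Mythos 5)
Your proposal is correct, and it splits naturally into two comparisons. Part (\rom1) is the paper's own argument verbatim: each $Y_i\in A^2_{(0)}(X)$ by proposition \ref{cc}, each $D_i\in A^1(X)=A^1_{(0)}(X)$, bigradedness (valid since $Z$ is very general) gives $a\in A^3_{(0)}(X)$, and one concludes by $A^3_{(0)}(X)\cap A^3_{hom}(X)=0$. One caveat: this last vanishing is an unconditional theorem of Shen--Vial, which the paper simply cites from \cite{SV}; it does not come for free from a ``Fourier duality with $A^1_{(0)}(X)$'' heuristic, so your parenthetical justification should be replaced by that citation --- though you did correctly isolate it as the one non-formal ingredient. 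Part (\rom2) is where you genuinely diverge from the paper. The paper does not re-invoke proposition \ref{cc} at all: it appeals to the claim (\ref{in0}) established \emph{inside} the proof of that proposition, namely that $Y\cdot A^2(X)\subset A^4_{(0)}(X)$ for any Lagrangian constant cycle subvariety $Y$; that claim is proved by exhibiting the non-zero element $Y\cdot c=\lambda c^2=27\lambda\,\oo_X$ of the one-dimensional space $Y\cdot A^2(X)$, which requires the cohomological proposition \ref{katia} (to know $\lambda\neq 0$) and lemma \ref{lem} (to kill $b_0^\prime\cdot c$ and $b_2\cdot c$ and to compute $c^2$). You instead take as witness the element $Y_i\cdot g^2$, non-zero because $g$ is ample, and place it in $A^4_{(0)}(X)$ using the conclusion $Y_i\in A^2_{(0)}(X)$ of proposition \ref{cc} together with bigradedness. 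Both arguments turn on the same collapsing trick --- the constant cycle hypothesis squeezes $Y_i\cdot A^2(X)$ onto a line, so it suffices to locate one non-zero point of that line in $A^4_{(0)}(X)$ --- but with different witnesses: the paper's version is logically prior to proposition \ref{cc} (indeed claim (\ref{in0}) is what proves that proposition), whereas yours runs downstream of it; in exchange, yours needs neither the distinguished class $c$ nor the cohomological input of proposition \ref{katia} at the corollary stage, which makes it slightly more self-contained given the Proposition.
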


 \vskip0.6cm

\begin{convention} In this article, the word {\sl variety\/} will refer to a reduced irreducible scheme of finite type over $\C$. A {\sl subvariety\/} is a (possibly reducible) reduced subscheme which is equidimensional. 

{\bf All Chow groups will be with rational coefficients}: we will denote by $A_j(X)$ the Chow group of $j$--dimensional cycles on $X$ with $\QQ$--coefficients; for $X$ smooth of dimension $n$ the notations $A_j(X)$ and $A^{n-j}(X)$ are used interchangeably. 

The notations $A^j_{hom}(X)$, $A^j_{AJ}(X)$ will be used to indicate the subgroups of homologically trivial, resp. Abel--Jacobi trivial cycles.


We use $H^j(X)$ 
to indicate singular cohomology $H^j(X,\QQ)$.
\end{convention}

\section{Preliminaries}

 \subsection{The Fourier decomposition}   
 
 \begin{theorem}[Shen--Vial \cite{SV}]\label{svfour} Let $Z\subset\PP^5(\C)$ be a smooth cubic fourfold, and let $X$ be the Fano variety of lines in $Z$. There is a decomposition
   \[ A^i(X) =\bigoplus_{\stackrel{0\le j\le i}{j\ {\scriptstyle even}}} A^i_{(j)}(X)\ ,\]
   with the following properties:
   
   \noindent
   (\rom1)  $A^i_{(j)}(X) =(\Pi_{2i-j}^X)_\ast A^j(X)$, where $\{\Pi^X_\ast\}$ is a certain self--dual Chow--K\"unneth decomposition;
   
   \noindent
   (\rom2) $A^i_{(j)}(X)\subset A^i_{hom}(X)$ for $j>0$;
   
   \noindent
   (\rom3) if $Z$ is very general, $A^\ast_{(\ast)}(X)$ is a bigraded ring.
   \end{theorem}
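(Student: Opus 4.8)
The plan is to construct the entire decomposition from a single geometric correspondence, the \emph{incidence correspondence}
\[ L := \bigl[\{(\ell,\ell')\in X\times X : \ell\cap\ell'\neq\emptyset\}\bigr]\ \in\ A^2(X\times X)\ , \]
together with the Pl\"ucker polarization $g\in A^1(X)$ and the degree-two class $c\in A^2(X)$ built from the tautological rank-two bundle $\EE$ on $X$. The backbone of the argument is a \emph{quadratic relation} satisfied by $L$ viewed as a self-correspondence: one proves, by an explicit computation on the universal family of lines $P\subset X\times Z$ and on the geometry of how two lines on a cubic fourfold meet, an identity of the shape
\[ L\circ L = a\,\Delta_X + L\circ\Gamma_g + (\text{a polynomial in }g_1,g_2,c_1,c_2)\ , \]
where $\Gamma_g$ is the graph-type correction attached to multiplication by $g$ and $a$ is an explicit constant. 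Establishing this identity -- in particular keeping track of the excess-intersection contributions where the incidence locus fails to be transverse -- is the first and most computational step.

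Granted the quadratic relation, I would next set up a Fourier transform $\FF$ on $A^\ast(X)$ modelled on Beauville's transform for abelian varieties, taking $\FF$ to be a suitable polynomial expression in $L$ (the analogue of Beauville's $e^\ell$). The quadratic relation forces $\FF$ to satisfy the formal involution-type identities that, exactly as in the abelian case, produce a finite eigenspace decomposition $A^i(X)=\bigoplus_j A^i_{(j)}(X)$ indexed by the eigenvalues of the $L$-action. Rewriting the eigenprojectors as explicit polynomials in $L$ and $g$ yields mutually orthogonal idempotent correspondences $\Pi^X_k\in A^4(X\times X)$ whose action recovers $A^i_{(j)}(X)=(\Pi^X_{2i-j})_\ast A^j(X)$; the symmetry $L={}^tL$ of the incidence locus makes the decomposition self-dual, and verifying $\sum_k\Pi^X_k=\Delta_X$ together with orthogonality shows it is a genuine Chow--K\"unneth decomposition, giving part (\rom1). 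That only even $j$ occur is read off from the eigenvalue analysis.

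Part (\rom2) is then a cohomological check. The cohomology of the Fano variety $X$ is completely known, and the action of the cohomological realization of $L$ on $H^\ast(X)$ is controlled via the Abel--Jacobi-type isomorphism $H^4(Z)\cong H^2(X)$ of Beauville--Donagi; one verifies directly that the classes $\Pi^X_{2i-j}$ with $j>0$ act as zero on $H^{2i}(X)$, whence $A^i_{(j)}(X)\subset A^i_{hom}(X)$ for $j>0$.

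The genuinely hard part is (\rom3), the multiplicativity of the bigrading. The inclusion $A^i_{(j)}(X)\cdot A^{i'}_{(j')}(X)\subset A^{i+i'}_{(j+j')}(X)$ is equivalent to a compatibility of the small diagonal $\delta_X\in A^\ast(X\times X\times X)$ with the projectors $\Pi^X_k$, i.e.\ to a family of cubic relations among $L$, $g$, $c$ and $\delta_X$ in the Chow ring of $X\times X\times X$. These relations already hold in cohomology, and the crux is to lift them to rational equivalence. Here the very general hypothesis is indispensable: it guarantees that the relevant Chow groups of $X$ and of $Z$ are as small as Hodge theory predicts, and it permits a \emph{spread} argument in the sense of Voisin -- the desired identity is first established for the universal family over a Zariski open of the moduli of cubic fourfolds, where a cohomological relation holding fibrewise and of the correct ``generic'' type can be promoted to a cycle-theoretic one and then specialized to the very general fibre. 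I expect the main obstacle to lie precisely in this spreading step, since one must show that the a priori correction cycles supported over proper closed subsets of the moduli do not contribute for very general $Z$.
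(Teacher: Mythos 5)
Your outline should first be measured against what the paper actually does: theorem \ref{svfour} is quoted from the Shen--Vial Memoir, and the paper's ``proof'' is nothing more than a pointer to \cite[Theorems 2, 3.3 and 3]{SV}. Judged against the argument in \cite{SV} itself, your reconstruction gets the architecture essentially right: $L$ is indeed obtained from the incidence correspondence by a correction which is a polynomial in $g_1,g_2,c_1,c_2$ (this is the class of \cite[Equation (107)]{SV}, quoted in theorem \ref{mult}); the engine is a quadratic relation for $L$, proven by excess-intersection computations on the geometry of pairs of incident lines; the decomposition is deduced formally from that relation via a Fourier transform modelled on Beauville's abelian-variety transform and is then matched with a self-dual Chow--K\"unneth decomposition, giving (\rom1); your argument for (\rom2) --- the Chow--K\"unneth projectors $\Pi_{2i-j}$ with $j>0$ act as zero on $H^{2i}(X)$ --- is the standard one; and the very general hypothesis enters only in (\rom3), through spread arguments over the family of cubic fourfolds, exactly as you predict.

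There is, however, one concrete defect in your backbone identity. You write the quadratic relation as $L\circ L=a\,\Delta_X+L\circ\Gamma_g+(\hbox{polynomial in }g_i,c_i)$, using \emph{composition} of correspondences. For $X$ of dimension $4$ and $L\in A^2(X\times X)$, the composition $L\circ L=p_{13\ast}\bigl(p_{12}^\ast L\cdot p_{23}^\ast L\bigr)$ lies in $A^0(X\times X)$, whereas $\Delta_X\in A^4(X\times X)$: the identity as written is dimensionally inconsistent and cannot hold. The relation that powers \cite{SV} concerns the intersection-theoretic square $L^2=L\cdot L\in A^4(X\times X)$, of the shape $L^2=2\Delta_X-\alpha(g_1,g_2)\cdot L-\beta(g_1,g_2,c_1,c_2)$ with explicit rational coefficients, and the Fourier transform is $\FF(\sigma)=p_{2\ast}(e^L\cdot p_1^\ast\sigma)$ with $e^L=1+L+\tfrac{1}{2}L^2$, the powers again being intersection products; likewise your term $L\circ\Gamma_g$ should be an intersection product of $L$ with a polynomial in $g_1,g_2$. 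With this repair, your second step goes through as in \cite{SV}. Two further calibrations: the pieces are characterized by the behaviour of $\FF$ rather than as literal eigenspaces of a single operator (the genuine eigenspace decomposition, for $\phi^\ast$, is \cite[Theorem 21.9]{SV} and comes later); and (\rom3) is not established in \cite{SV} as a single small-diagonal compatibility --- the multiplicative Chow--K\"unneth property is strictly stronger than bigradedness and your ``equivalent'' overstates the link --- but case by case on pairs of pieces, making essential use of Voisin's degree-$16$ rational self-map $\phi$ together with spreads in precisely the manner your last paragraph describes.
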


 \begin{proof} The decomposition is defined in terms of a Fourier transform, involving the cycle $L\in A^2(X\times X)$ representing the Beauville--Bogomolov class (cf. \cite[Theorem 2]{SV}).
 Points (\rom1) and (\rom2) follow from \cite[Theorem 3.3]{SV}. Point (\rom3) is \cite[Theorem 3]{SV}.
 \end{proof}

 \subsection{Multiplicative structure}
 
 \begin{theorem}[Shen--Vial \cite{SV}]\label{mult} Let $Z\subset\PP^5(\C)$ be a smooth cubic fourfold, and let $X$ be the Fano variety of lines in $Z$.
    There is a distinguished class $l\in A^2_{(0)}(X)$ such that intersection induces an isomorphism
    \[ \cdot l\colon\ \ \ A^2_{(2)}(X)\ \xrightarrow{\cong}\ A^4_{(2)}(X)\ .\]
The inverse isomorphism is given by 
  \[ {1\over 25} L_\ast\colon\ \ \ A^4_{(2)}(X)\ \xrightarrow{\cong}\ A^2_{(2)}(X)\ ,\]
  where $L\in A^2(X\times X)$ is the class defined in \cite[Equation (107)]{SV}.
\end{theorem}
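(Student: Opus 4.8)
The plan is to deduce the statement from the relations satisfied by the Beauville--Bogomolov cycle $L$ established in \cite{SV}, reducing the asserted isomorphism to an operator identity on the piece $A^2_{(2)}(X)$. First I would recall the two ingredients furnished by \cite{SV}: the explicit self--correspondence $L\in A^2(X\times X)$ of \cite[Equation (107)]{SV}, and the distinguished class $l\in A^2_{(0)}(X)$ obtained from the incidence geometry of the Fano variety (the Pl\"ucker polarization $g$ and the Chern class $c$ of the tautological subbundle). By Theorem \ref{svfour}(\rom1) the two relevant pieces are cut out by the Chow--K\"unneth projectors, namely $A^2_{(2)}(X)=(\Pi^X_2)_\ast A^2(X)$ and $A^4_{(2)}(X)=(\Pi^X_6)_\ast A^2(X)$, so that both are governed by the action of $L$.

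Next I would carry out the core reduction. The heart of \cite{SV} is a quadratic relation satisfied by $L$ as a self--correspondence of $X$, together with compatibility relations between $L$, intersection by $l$, and the projectors $\Pi^X_\ast$. Granting these, I would establish the two composite identities
\[ \tfrac{1}{25}\,L_\ast\circ(\cdot\, l)=\ide\ \ \text{on}\ A^2_{(2)}(X),\qquad (\cdot\, l)\circ\tfrac{1}{25}\,L_\ast=\ide\ \ \text{on}\ A^4_{(2)}(X)\,. \]
Each follows by expanding the relevant correspondence composition: intersecting with $l$ and then applying $L_\ast$ is computed by a product of correspondences which, after invoking the quadratic relation for $L$, collapses on the weight--$(2)$ eigenspace to $25$ times the appropriate projector. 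Restricting to the image of $\Pi^X_2$ (resp. $\Pi^X_6$) then yields the identity up to the factor $25$. This simultaneously shows that $\cdot\, l$ is injective and that $\tfrac1{25}L_\ast$ is a two--sided inverse, hence that $\cdot\, l$ is an isomorphism $A^2_{(2)}(X)\xrightarrow{\cong}A^4_{(2)}(X)$.

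The main obstacle is entirely contained in the geometric input, i.e. the verification of the quadratic relation for $L$ and its interaction with $l$ on this particular $X$; this is where the specific geometry of the Fano variety of lines (the universal line, the incidence correspondence $P\subset X\times Z$, and the intersection numbers producing the constant $25$) enters, and it is precisely the content of \cite{SV}. I would therefore not reprove it, but extract the precise statements: the formula for $L$ in \cite[Equation (107)]{SV} together with the quadratic and Lefschetz--type relations of \cite{SV}. The remaining work is bookkeeping---matching the bigrading conventions of Theorem \ref{svfour} to the indices in those relations, and checking that the normalizing constant is indeed $25$ and not some other intersection number on $X$.
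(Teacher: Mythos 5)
Your proposal is correct and takes essentially the same route as the paper: the paper's entire proof of this statement is the one-line citation ``This follows from \cite[Theorems 2.2 and 2.4]{SV}'', and your plan likewise defers all the geometric content (the quadratic relation for $L$, its compatibility with $\cdot\, l$ and the projectors, and the constant $25$) to Shen--Vial rather than reproving it. Your extra sketch of the internal mechanism is a plausible gloss on what happens inside \cite{SV}, but it is neither needed for nor part of the paper's own argument.
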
    
    
 \begin{proof} This follows from \cite[Theorems 2.2 and 2.4]{SV}.
  \end{proof}

 \subsection{The class $c$}
 \label{scc}
 
 \begin{lemma}[Voisin \cite{V17}, Shen--Vial \cite{SV}]\label{lem} Let $Z\subset\PP^5(\C)$ be a smooth cubic fourfold, and let $X$ be the Fano variety of lines in $Z$. Let $c:=c_2(\EE_2)\in A^2(X)$, where $\EE_2$ is the restriction to $X$ of the tautological rank $2$ vector bundle on the Grassmannian of lines in $\PP^5(\C)$. There exists a constant cycle surface $Y_0\subset X$ such that
   \[ Y_0=c\ \ \ \hbox{in}\ A^2(X)\ .\]
  (In particular, $\cdot c\colon A^2_{hom}(X)\to A^4(X)$ is the zero--map.)
  
  Moreover, if $Z$ is very general then the class $c$ is in $A^2_{(0)}(X)$ (where $A^\ast_{(\ast)}(X)$ is the Fourier decomposition of \cite{SV}). 
 \end{lemma}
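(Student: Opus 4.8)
The plan is to establish the three assertions in turn: that $c$ is represented by a constant cycle surface, the resulting vanishing of $\cdot c$ on homologically trivial classes, and finally the membership in $A^2_{(0)}(X)$ under the very general hypothesis.

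First I would make $c$ geometrically explicit. As $\EE_2$ has rank $2$ we have $c=c_2(\EE_2)=c_2(\EE_2^{\vee})$, and since $\EE_2^{\vee}$ is a quotient of the trivial bundle $(\C^6)^{\vee}\otimes\OO_X$ it is globally generated; a generic linear form $\phi\in(\C^6)^{\vee}$ gives a section of $\EE_2^{\vee}$ whose zero locus is
\[ \{\,\ell\in X : \ell\subset H_\phi\,\}\ =\ F(Z\cap H_\phi), \]
the Fano surface of lines of the smooth cubic threefold $Z\cap H_\phi$, where $H_\phi=\{\phi=0\}$. Thus $c$ is the class of a genuine surface in $X$. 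The decisive point, and the step I expect to be the real obstacle, is that $c$ is in fact represented by a \emph{constant cycle} surface $Y_0$: this is invisible from the numerics of $c$ and rests on Voisin's analysis \cite{V17}, which exhibits explicit rational equivalences among the lines parametrized by $Y_0$—available inside the fourfold $X=F(Z)$ though not inside $Y_0$ itself—collapsing the image of $A_0(Y_0)\to A_0(X)$ to a single class.

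Granting a constant cycle surface $Y_0$ with $[Y_0]=c$, write $\iota\colon Y_0\hookrightarrow X$ for the inclusion and $[pt_0]$ for the common class in $A_0(X)$ of the points of $Y_0$. For any $\beta\in A^2(X)$ the projection formula and the constant cycle property give
\[ c\cdot\beta=\iota_\ast\iota^\ast\beta=\bigl(\deg\iota^\ast\beta\bigr)\,[pt_0],\qquad \deg\iota^\ast\beta=\int_X c\cup\beta. \]
If moreover $\beta\in A^2_{hom}(X)$ then $\int_X c\cup\beta=0$, so $c\cdot\beta=0$; this is the parenthetical assertion that $\cdot c\colon A^2_{hom}(X)\to A^4(X)$ vanishes.

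For the final statement, assume $Z$ very general and decompose $c=c_{(0)}+c_{(2)}$ along $A^2(X)=A^2_{(0)}(X)\oplus A^2_{(2)}(X)$ (Theorem \ref{svfour}); the goal is $c_{(2)}=0$. Multiplying by the distinguished class $l\in A^2_{(0)}(X)$ and using that $A^\ast_{(\ast)}(X)$ is a bigraded ring (Theorem \ref{svfour}(\rom3)) places the two products in complementary summands, $c_{(0)}\cdot l\in A^4_{(0)}(X)$ and $c_{(2)}\cdot l\in A^4_{(2)}(X)$, so $c_{(2)}\cdot l$ is precisely the $A^4_{(2)}$-component of $c\cdot l$. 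By the displayed formula $c\cdot l=(\deg\iota^\ast l)\,[pt_0]$ is a multiple of $[pt_0]$; and the points of the constant cycle surface $Y_0$ lie in the canonical class $o_X$ spanning the one-dimensional group $A^4_{(0)}(X)$ (again a consequence of the geometry of $Y_0$, cf. \cite{V17}, \cite{SV}), whence $c\cdot l\in A^4_{(0)}(X)$ has trivial $A^4_{(2)}$-component. Therefore $c_{(2)}\cdot l=0$, and since $\cdot l\colon A^2_{(2)}(X)\xrightarrow{\cong}A^4_{(2)}(X)$ is an isomorphism (Theorem \ref{mult}), we conclude $c_{(2)}=0$, i.e. $c\in A^2_{(0)}(X)$.
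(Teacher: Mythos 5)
Your proposal gets the geometric realization of $c$ right (the Fano surface $F(Z\cap H_\phi)$ of a hyperplane section, via a section of the globally generated bundle $\EE_2^\vee$), but the decisive assertion of the lemma --- that $c$ is represented by a \emph{constant cycle} surface --- is precisely the point you do not prove, and your guess at how \cite{V17} handles it is not what actually happens. For generic $H$, the surface $F(Z\cap H)$ is a smooth Lagrangian surface of general type, and it is not (known to be) a constant cycle surface; having class $c$ does not help, since being a constant cycle subvariety is a property of the subvariety itself, not of its rational equivalence class (if every Lagrangian surface of class $c$ were automatically constant cycle, conjecture \ref{conj2} would lose much of its content). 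Nor does the argument rest on rational equivalences ``available inside $X$ though not inside $Y_0$ itself.'' The actual mechanism, which the paper spells out: one \emph{specializes} the hyperplane so that $Z\cap H$ acquires $5$ nodes. The Fano surface $Y_0$ of lines on this nodal cubic threefold is then a \emph{rational} (singular) surface, so $A_0(Y_0)=\QQ$ with $\QQ$--coefficients --- the collapse of zero--cycles takes place inside $Y_0$ itself, for the elementary reason that all points of a rational surface become rationally equivalent after passing to a resolution; and since $Y_0$ is a member of the same family $\{F(Z\cap H)\}_H$, it still has class $c$ in $A^2(X)$. Without this degeneration idea, your argument only shows that $c$ is the class of some Lagrangian surface, which is far short of the lemma.

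The rest of your proposal is essentially sound. The projection--formula argument for the parenthetical statement is the standard one (with the caveat that $Y_0$ is singular, so ``$\iota_\ast\iota^\ast\beta$'' should be read as the refined intersection class supported on $Y_0$; with $\QQ$--coefficients and $X$ smooth this is harmless). Your treatment of the final statement is in fact a genuine alternative to the paper, which simply cites \cite[Theorem 21.9(iii)]{SV}: multiplying by $l$, using bigradedness (theorem \ref{svfour}) and the isomorphism $\cdot\, l\colon A^2_{(2)}(X)\xrightarrow{\cong}A^4_{(2)}(X)$ of theorem \ref{mult} does force $c_{(2)}=0$, \emph{provided} one justifies your claim that the point class $[pt_0]$ of $Y_0$ equals $\oo_X\in A^4_{(0)}(X)$. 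That claim is not automatic, but it does follow from the constant cycle property together with \cite[Lemma A.3(i)]{SV}: the self--intersection $Y_0\cdot Y_0$ is supported on $Y_0$, hence equals $27\,[pt_0]$, while $c^2=27\,\oo_X$; this is the same mechanism the paper exploits later in the proof of proposition \ref{cc}. So once the existence of the constant cycle surface is repaired by the nodal degeneration, your route to the final assertion goes through and is more self--contained than a bare citation.
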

 
 \begin{proof} This is well--known. As explained in \cite[Lemma 3.2]{V17}, the idea is to consider $Y\subset X$ defined as the Fano surface of lines contained in $Z\cap H$, where $H$ is a hyperplane in $\PP^5$. For general $H$, the surface $Y$ is a smooth surface of general type which is a Lagrangian subvariety of class $c$ in $A^2(X)$. However, if one takes $H$ such that $Z\cap H$ acquires $5$ nodes, then one obtains a singular surface $Y_0$ which is rational, hence
 $A_0(Y_0)=\QQ$. It follows that $Y_0\subset X$ is a constant cycle subvariety of class $c$ in $A^2(X)$.
 
 The last statement is \cite[Theorem 21.9(\rom3)]{SV}.
 \end{proof}

 \subsection{A result in cohomology}
 
 \begin{definition}[Voisin \cite{V14}] Let $X$ be a hyperk\"ahler variety of dimension $2m$. A Hodge class $a\in H^{2m}(X)\cap F^m$ is {\em coisotropic\/} if
 \[ \cup a\colon\ \ \ H^{2,0}(X)\ \to\ H^{m+2,m}(X) \]
 is the zero--map.
 \end{definition}
 
 (This is \cite[Definition 1.5]{V14}, where coisotropic cohomology classes are defined in any degree $2i$.)

 \begin{proposition}\label{katia} Let $Z\subset\PP^5(\C)$ be a very general smooth cubic fourfold, and let $X$ be the Fano variety of lines in $Z$. Assume
 $a\in H^4(X)$ is coisotropic. Then 
   \[ a=\lambda\cdot c\ \ \ \hbox{in}\ H^4(X)\ ,\]
   where $\lambda\in\QQ$ and $c\in A^2(X)$ is as in lemma \ref{lem}.
   \end{proposition}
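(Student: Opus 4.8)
The plan is to determine the entire space of Hodge classes in $H^4(X)$ and then to intersect it with the linear coisotropy condition. Throughout I will use the standard fact that for a hyperk\"ahler fourfold of $K3^{[2]}$-type the cup-product map $\mathrm{Sym}^2 H^2(X)\to H^4(X)$ is an isomorphism, so that every degree-four class is a sum of products of classes in $H^2(X)$.

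Since $Z$ is very general, $NS(X)=\QQ g$ with $g$ the Pl\"ucker polarization, and there is an orthogonal (for the Beauville--Bogomolov form $q$) decomposition of rational Hodge structures $H^2(X,\QQ)=\QQ g\oplus V$, where the transcendental part $V$ has $K3$-type, rank $22$, and no nonzero rational $(1,1)$-class. Applying $\mathrm{Sym}^2$ gives $H^4(X)=\QQ\,g^2\oplus(g\otimes V)\oplus\mathrm{Sym}^2 V$ as Hodge structures. The summand $g\otimes V$ carries no Hodge class because $V$ has none in degree $(1,1)$, so the Hodge classes of $H^4(X)$ consist of $\QQ\,g^2$ together with the Hodge classes of $\mathrm{Sym}^2 V$.

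The main obstacle is to show that $\mathrm{Sym}^2 V$ has only a one-dimensional space of Hodge classes; this is exactly where the very general hypothesis enters. For $Z$ very general the Mumford--Tate group of $V$ is the full special orthogonal group $\mathrm{SO}(q|_V)$, and hence the Hodge classes of $\mathrm{Sym}^2 V$---being the Mumford--Tate invariants---reduce to the line spanned by the invariant quadratic form $q|_V$. (Concretely, $\mathrm{Sym}^2$ of the standard representation of an orthogonal group splits as a trace-free irreducible summand plus the trivial summand generated by the form.) Granting this, the space of Hodge classes in $H^4(X)$ is exactly two-dimensional.

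It remains to impose coisotropy. Since $a$ is a Hodge class of type $(2,2)$, cup product with $a$ maps $H^{2,0}(X)=\QQ\sigma$ into the line $H^{4,2}(X)$, so coisotropy is the single linear condition $a\cdot\sigma=0$ in $H^6(X)$, equivalently $\int_X a\,\sigma\,\beta=0$ for all $\beta\in H^2(X)$. For $a=c$ this holds: by Lemma \ref{lem} the class $c$ is represented by the Lagrangian Fano surface $Y$ of lines in a hyperplane section, whence $\sigma|_Y=0$ and $\int_X c\,\sigma\,\beta=\int_Y \sigma|_Y\cdot\beta|_Y=0$. For $a=g^2$ it fails: taking $\beta=\bar\sigma$ and using the fully polarized Fujiki relation, $\int_X g^2\sigma\bar\sigma$ equals, up to a nonzero universal constant, $q(g,g)\,q(\sigma,\bar\sigma)$ (the cross terms vanish as $q(g,\sigma)=q(g,\bar\sigma)=0$), which is nonzero because $q(g,g)>0$ and $q(\sigma,\bar\sigma)\neq 0$. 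Because the coisotropic classes form a linear subspace $W$ of the two-dimensional space of Hodge classes, with $c\in W$ (and $c\neq 0$) but $g^2\notin W$, we conclude $W=\QQ c$. Hence $a=\lambda c$ for some $\lambda\in\QQ$, as desired.
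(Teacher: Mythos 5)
Your proof is correct and follows essentially the same route as the paper: both arguments reduce to the fact that for very general $Z$ the space of Hodge classes in $H^4(X)$ is two-dimensional, and then use the linear coisotropy condition (which $c$ satisfies, via the Lagrangian surface, and $g^2$ does not) to kill the $g^2$-component. The only difference is that the paper cites the two-dimensionality as known, whereas you supply a proof via the isomorphism $\mathrm{Sym}^2 H^2(X)\cong H^4(X)$ and the maximality of the Mumford--Tate group of the transcendental part, and you check that $g^2$ is not coisotropic by the Fujiki relation where the paper invokes hard Lefschetz.
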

   
   \begin{proof} For very general $Z$, it is known that $N^2 H^4(X)$ (which is the subspace of Hodge classes, as the Hodge conjecture is known for $X$) has dimension $2$. This is all that we need for the proof.
   
   For any ample class $g\in A^1(X)$, the $\QQ$--vector space $N^2 H^4(X)$ is generated by $g^2$ and $c$. (These two elements cannot be proportional, as cupping with $g^2$ induces an isomorphism $H^{2,0}(X)\cong H^{4,2}(X)$ by hard Lefschetz, whereas cupping with $c$ is the zero--map $H^{2,0}(X)\to H^{4,2}(X)$.)
   Let us write
   \[ a= \lambda_1 c +\lambda_2 g^2\ \ \ \hbox{in}\ N^2 H^4(X)\ .\]
   The coisotropic condition forces $\lambda_2$ to be $0$, and we are done.
     \end{proof}
     
    \begin{remark} In particular, proposition \ref{katia} implies that any Lagrangian subvariety $Y\subset X$ is proportional to $c$ in cohomology:
      \[ Y=\lambda\cdot c\ \ \ \hbox{in}\ H^4(X)\ .\]
      This was first observed by Amerik \cite[Remark 9]{Am}.
      \end{remark}

 \section{Main result}
 
 \begin{proposition}\label{cc} Let $Z\subset\PP^5(\C)$ be a very general smooth cubic fourfold, and let $X$ be the Fano variety of lines in $Z$. Assume
 $Y\subset X$ is a constant cycle subvariety of codimension $2$. Then 
   \[ Y\in A^2_{(0)}(X)\ .\]
   \end{proposition}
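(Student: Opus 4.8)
The plan is to write $[Y]=y_0+y_2$ according to the decomposition $A^2(X)=A^2_{(0)}(X)\oplus A^2_{(2)}(X)$ of Theorem \ref{svfour}, and to prove that the homologically trivial part $y_2\in A^2_{(2)}(X)$ vanishes; this is exactly the assertion $Y\in A^2_{(0)}(X)$. Since intersection with the distinguished class $l$ is an \emph{isomorphism} $\cdot l\colon A^2_{(2)}(X)\xrightarrow{\cong}A^4_{(2)}(X)$ (Theorem \ref{mult}), it suffices to show $y_2\cdot l=0$. Because $A^\ast_{(\ast)}(X)$ is a bigraded ring (Theorem \ref{svfour}(\rom3)) and $l\in A^2_{(0)}(X)$, one has $y_0\cdot l\in A^4_{(0)}(X)$ and $y_2\cdot l\in A^4_{(2)}(X)$, so $y_2\cdot l$ is precisely the $A^4_{(2)}$--component of the product $[Y]\cdot l$. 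Thus the whole problem reduces to showing that $[Y]\cdot l$ lies in $A^4_{(0)}(X)$.

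The geometric input is that $[Y]\cdot l=j_\ast j^\ast l$, where $j\colon\widetilde Y\to X$ is a desingularization of $Y$; this is a $0$--cycle supported on the constant cycle surface $Y$, hence lies in the one--dimensional image of $A_0(Y)\to A_0(X)$. Writing $z_Y\in A_0(X)=A^4(X)$ for the degree $1$ generator of this image, we get $[Y]\cdot l=d\,z_Y$ with $d=\int_X[Y]\cup l$. So it is enough to prove the key claim that $z_Y\in A^4_{(0)}(X)$.

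To establish this claim I would first pass to cohomology. It is known that a constant cycle subvariety of middle dimension is Lagrangian, so $\sigma|_{\widetilde Y}=0$ for the symplectic form $\sigma$ spanning $H^{2,0}(X)$; hence $[Y]\cup\sigma=j_\ast(\sigma|_{\widetilde Y})=0$, i.e. $[Y]$ is coisotropic. Proposition \ref{katia} then gives $[Y]=\lambda\,c$ in $H^4(X)$, with $\lambda\neq0$ since the effective surface class $[Y]$ is nonzero. Now I intersect with $c$ instead of with $l$: the crucial point is Lemma \ref{lem}, by which $\cdot c$ annihilates $A^2_{hom}(X)\supseteq A^2_{(2)}(X)$, so that $[Y]\cdot c=y_0\cdot c\in A^4_{(0)}(X)$. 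On the other hand $[Y]\cdot c=d'\,z_Y$ with $d'=\int_X[Y]\cup c=\lambda\int_X c^2\neq0$, using $[Y]=\lambda c$ in cohomology and the nonvanishing of the Chern number $\int_X c^2$. Dividing by $d'$ yields $z_Y\in A^4_{(0)}(X)$, as wanted (indeed $z_Y=o_X$, the canonical class).

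Combining the two parts, since $z_Y\in A^4_{(0)}(X)$ we obtain $[Y]\cdot l=d\,z_Y\in A^4_{(0)}(X)$, so its $A^4_{(2)}$--component $y_2\cdot l$ vanishes, and the isomorphism of Theorem \ref{mult} forces $y_2=0$. I expect the main obstacle to be precisely the key claim $z_Y\in A^4_{(0)}(X)$: the subtlety is that proportionality of $[Y]$ to $c$ \emph{in cohomology} does not by itself control the Chow--theoretic pieces, and the argument hinges on the special annihilation property of $c$ from Lemma \ref{lem}, used together with the cohomological proportionality to guarantee $d'\neq0$. The auxiliary facts I would still need to pin down are the implication ``constant cycle of middle dimension $\Rightarrow$ Lagrangian'' and the nonvanishing $\int_X c^2\neq0$.
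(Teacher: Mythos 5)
Your proof is correct and is essentially the paper's own argument: both reduce, via the bigraded ring structure and the isomorphism $\cdot\,\ell\colon A^2_{(2)}(X)\xrightarrow{\cong}A^4_{(2)}(X)$ of theorem \ref{mult}, to showing $Y\cdot\ell\in A^4_{(0)}(X)$, and both establish this by combining the one--dimensionality of the space of $0$--cycles supported on $Y$ with the fact that $Y\cdot c$ is a \emph{nonzero} element of $A^4_{(0)}(X)$, obtained from proposition \ref{katia} together with the annihilation property $c\cdot A^2_{hom}(X)=0$ of lemma \ref{lem}. The two auxiliary facts you flag are indeed available: the implication ``middle--dimensional constant cycle subvariety $\Rightarrow$ Lagrangian'' is standard (and is used implicitly in the paper's proof as well, since applying proposition \ref{katia} to $Y$ requires knowing $[Y]$ is coisotropic), and the nonvanishing $\int_X c^2=27\neq 0$ follows from $c^2=27\,\oo_X$ \cite[Lemma A.3(i)]{SV}, which the paper invokes at the same point of the argument.
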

   
  \begin{proof} We assume there is a decomposition
    \[ Y =b_0+b_2\ \ \ \hbox{in}\ A^2_{(0)}(X)\oplus A^2_{(2)}(X)\ ,\]
    with $b_i\in A^2_{(i)}(X)$. We will show that $b_2$ must be $0$.
    
 First, we claim that
    \begin{equation}\label{in0} Y\cdot a\ \ \ \in A^4_{(0)}(X)\ \ \ \forall a\in A^2(X)\ .\end{equation}
    Indeed, the subvectorspace $Y\cdot A^2(X)\subset A^4(X)$ has dimension $1$, as $Y\subset X$ is a constant cycle subvariety. To prove (\ref{in0}), it remains to exclude the possibility that
     \[ \bigl( Y\cdot A^2(X)\bigr) \cap A^4_{(0)}(X) =0\ .\]
   But we know (proposition \ref{katia}) that
   \[ Y=\lambda c\ \ \ \hbox{in}\ H^4(X)\ ,\]
   for some $\lambda\in\QQ^\ast$. Since $c\in A^2_{(0)}(X)$, this implies there is a further decomposition
    \[ Y= \lambda c + b_0^\prime + b_2\ \ \ \hbox{in}\ A^2(X)  \ ,\]
    with $b_0^\prime\in A^2_{(0)}(X)\cap A^2_{hom}(X)$ (which is conjecturally, but not provably, zero).
    Consider the intersection
    \[ Y\cdot c = \lambda c^2 + b_0^\prime\cdot c + b_2\cdot c =\lambda c^2\ \ \ \hbox{in}\ A^4(X)\ .\]
    (Here we have used that $c\cdot A^2_{hom}(X)=0$ in $A^4(X)$, which is lemma \ref{lem} or \cite[Lemma A.3(\rom3)]{SV}.)
    Since $c^2=27\oo_X$ where $\oo_X$ is a certain distinguished generator of $A^4_{(0)}(X)$ \cite[Lemma A.3(\rom1)]{SV}, the intersection $Y\cdot c$ defines a non--zero element in $A^4_{(0)}(X)$. This proves the claim.
    
   To prove the proposition, consider the intersection
    \[ Y\cdot \ell = b_0\cdot\ell + b_2\cdot\ell\ \ \ \hbox{in}\ A^4(X)\ ,\]
    where $\ell$ is the class of theorem \ref{mult}.
    Since $\ell\in A^2_{(0)}(X)$ and $A^\ast_{(\ast)}(X)$ is a bigraded ring, we have that
     $b_i\cdot\ell\in A^4_{(i)}(X)$. It follows from (\ref{in0}) that $Y\cdot\ell\in A^4_{(0)}(X)$ and so 
     \[ b_2\cdot\ell=0\ \ \ \hbox{in}\ A^4_{(2)}(X)\ .\]
     But then, applying theorem \ref{mult}, we find that $b_2=0$ and we are done. 
      \end{proof}

\begin{remark} Let $X$ be the Fano variety of a very general cubic fourfold. We have seen (proposition \ref{katia}) that any Lagrangian constant cycle subvariety $Y$ is proportional to the class $c$ in cohomology. 
 Proposition \ref{cc} suggests that the same should be true modulo rational equivalence: indeed, $Y$ is proportional to $c$ in $A^2(X)$ modulo the 
 ``troublesome part'' $A^2_{(0)}(X)\cap A^2_{hom}(X)$ (which is conjecturally zero).
   \end{remark}

 
 \section{Corollaries}

We present three corollaries that provide weak versions of conjecture \ref{conj2}(\rom2).
  
  \begin{corollary}\label{cc2} Let $Z\subset\PP^5(\C)$ be a very general smooth cubic fourfold, and let $X$ be the Fano variety of lines in $Z$. Let $a\in A^4(X)$ be a $0$--cycle of the form
   \[ a = \displaystyle\sum_{i=1}^r  Y_i\cdot b_i\ \ \ \in A^4(X)\ ,\]
   where $Y_i$ is a Lagrangian constant cycle subvariety and $b_i\in A^2(X)$. Then $a$ is rationally trivial if and only if $a$ is homologically trivial.
   \end{corollary}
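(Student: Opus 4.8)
The plan is to push the whole cycle $a$ into the degree--zero piece $A^4_{(0)}(X)$ of the Fourier decomposition, and then to use that, \emph{for zero--cycles}, this piece provably injects into cohomology. The ``only if'' direction is immediate, since rational triviality implies homological triviality; so the content is the converse, which I would deduce from two facts: that $a\in A^4_{(0)}(X)$, and that $A^4_{(0)}(X)\cap A^4_{hom}(X)=0$.

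To see that $a\in A^4_{(0)}(X)$ I would argue summand by summand. Each $Y_i$ is a Lagrangian constant cycle subvariety of the fourfold $X$, hence a constant cycle subvariety of codimension $2$, so the argument proving (\ref{in0}) in the proof of proposition~\ref{cc} applies verbatim to $Y_i$ and gives
\[ Y_i\cdot A^2(X)\ \subseteq\ A^4_{(0)}(X)\ . \]
Indeed $Y_i\cdot A^2(X)$ is at most one--dimensional (every such product is supported on $Y_i$, so lies in the image of $A_0(Y_i)\to A^4(X)$), and it contains the nonzero class $Y_i\cdot c=27\lambda_i\,\oo_X$: here $Y_i=\lambda_i c$ in $H^4(X)$ with $\lambda_i\neq 0$ by proposition~\ref{katia}, while $c\cdot A^2_{hom}(X)=0$ and $c^2=27\oo_X$ by lemma~\ref{lem}. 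Consequently each $Y_i\cdot b_i\in A^4_{(0)}(X)$, and summing over $i$ yields $a\in A^4_{(0)}(X)$.

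For the injectivity I would note that, by theorem~\ref{svfour}(\rom1), $A^4_{(0)}(X)=(\Pi^X_8)_\ast A^0(X)$ is one--dimensional, generated by the class $\oo_X$ of nonzero degree. The cycle class map $A^4(X)\to H^8(X)=\QQ$ is surjective (the class of a point maps to a generator) and, by theorem~\ref{svfour}(\rom2), vanishes on $A^4_{(2)}(X)\oplus A^4_{(4)}(X)$; since $A^4(X)=A^4_{(0)}(X)\oplus A^4_{(2)}(X)\oplus A^4_{(4)}(X)$, its restriction to the one--dimensional space $A^4_{(0)}(X)$ is therefore nonzero, hence injective. This gives $A^4_{(0)}(X)\cap A^4_{hom}(X)=0$. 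Combining the two steps: if $a$ is homologically trivial then $a\in A^4_{(0)}(X)\cap A^4_{hom}(X)=0$, so $a$ is rationally trivial.

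The crux---and the reason this corollary is unconditional whereas conjecture~\ref{conj2}(\rom2) is not---is exactly the vanishing $A^4_{(0)}(X)\cap A^4_{hom}(X)=0$. For codimension--two cycles the analogous ``troublesome part'' $A^2_{(0)}(X)\cap A^2_{hom}(X)$ is only conjecturally zero, which is what prevents proposition~\ref{cc} from settling the conjecture; but on zero--cycles the degree--zero piece is genuinely one--dimensional and so injects automatically. The only substantive computation, the containment $Y_i\cdot A^2(X)\subseteq A^4_{(0)}(X)$, has already been carried out in the proof of proposition~\ref{cc}, so I expect no serious additional obstacle.
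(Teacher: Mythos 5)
Your proof is correct and takes essentially the same route as the paper: both place $a$ in $A^4_{(0)}(X)$ by applying claim (\ref{in0}) from the proof of proposition \ref{cc} to each summand $Y_i\cdot b_i$, and then conclude using that the one--dimensional space $A^4_{(0)}(X)$ injects into cohomology. The only difference is that the paper simply asserts $A^4_{(0)}(X)\cong\QQ$ injects into cohomology (citing the Shen--Vial results), whereas you additionally spell out the verification of this injectivity from theorem \ref{svfour}.
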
  
  
   \begin{proof} We know from claim (\ref{in0}) that $a$ is in $A^4_{(0)}(X)$. But $A^4_{(0)}(X)\cong\QQ$ injects into cohomology.
   \end{proof}
   
 \begin{corollary}\label{cc1} Let $Z\subset\PP^5(\C)$ be a very general smooth cubic fourfold, and let $X$ be the Fano variety of lines in $Z$. Let $a\in A^3(X)$ be a $1$--cycle of the form
   \[ a = \displaystyle\sum_{i=1}^r  Y_i\cdot D_i\ \ \ \in A^3(X)\ ,\]
   where $Y_i$ is a Lagrangian constant cycle subvariety and $D_i\in A^1(X)$. Then $a$ is rationally trivial if and only if $a$ is homologically trivial.
   \end{corollary}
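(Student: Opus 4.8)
The plan is to mirror the proof of corollary \ref{cc2}: I would first show that $a$ lands in the $(0)$--part $A^3_{(0)}(X)$ of the Fourier decomposition, and then invoke the fact that this part injects into cohomology. Since the ``only if'' direction is automatic, assume $a$ is homologically trivial; the goal is $a=0$ in $A^3(X)$. To place $a$ in $A^3_{(0)}(X)$ I would argue as for claim (\ref{in0}), but using the bigraded ring structure directly. By proposition \ref{cc} each Lagrangian constant cycle subvariety satisfies $Y_i\in A^2_{(0)}(X)$, while theorem \ref{svfour} forces $A^1(X)=A^1_{(0)}(X)$ (the index $j=0$ being the only even one with $0\le j\le 1$), so $D_i\in A^1_{(0)}(X)$. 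As $Z$ is very general, $A^\ast_{(\ast)}(X)$ is a bigraded ring (theorem \ref{svfour}(\rom3)), whence
\[ Y_i\cdot D_i\ \in\ A^2_{(0)}(X)\cdot A^1_{(0)}(X)\ \subset\ A^3_{(0)}(X)\ ,\]
and summing over $i$ gives $a\in A^3_{(0)}(X)$.

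It remains to show that $A^3_{(0)}(X)$ injects into $H^6(X)$, i.e. that $A^3_{(0)}(X)\cap A^3_{hom}(X)=0$; this is the crux. It is the exact analogue, one codimension down, of the ``troublesome'' group $A^2_{(0)}(X)\cap A^2_{hom}(X)$ that blocks conjecture \ref{conj2}(\rom2), but in codimension $3$ it \emph{can} be controlled because it lies one Fourier step away from divisors. Concretely, I would use the correspondence $L$ of theorem \ref{mult}: the induced map $L_\ast\colon A^3(X)\to A^1(X)$ should restrict to an isomorphism $A^3_{(0)}(X)\xrightarrow{\cong}A^1(X)$, the $1$--cycle incarnation of the Shen--Vial Fourier isomorphisms (compare the isomorphism $\tfrac{1}{25}L_\ast$ of theorem \ref{mult}). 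Granting this, if $a\in A^3_{(0)}(X)$ is homologically trivial then so is $L_\ast a\in A^1(X)$, since $L$ is algebraic; but $A^1_{hom}(X)=0$ for the hyperk\"ahler variety $X$ (indeed $H^1(X)=0$ forces $\Pic^0(X)=0$), so $L_\ast a=0$ and hence $a=0$ by injectivity of $L_\ast$ on $A^3_{(0)}(X)$.

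The main obstacle is exactly this injectivity $A^3_{(0)}(X)\hookrightarrow H^6(X)$: the whole argument reduces to extracting from the Shen--Vial Fourier formalism the isomorphism $A^3_{(0)}(X)\cong A^1(X)$ and its compatibility with the cycle class map. Once that is secured, the vanishing of $A^1_{hom}(X)$ finishes the job --- and it is precisely this vanishing that makes corollary \ref{cc2} and the present statement unconditional, whereas conjecture \ref{conj2}(\rom2) stays open because the corresponding control of $A^2_{(0)}(X)\cap A^2_{hom}(X)$ is unavailable.
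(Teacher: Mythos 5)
Your proof follows the paper's own argument step for step: you place $a$ in $A^3_{(0)}(X)$ using proposition \ref{cc}, the equality $A^1(X)=A^1_{(0)}(X)$, and the bigraded ring structure of theorem \ref{svfour}(\rom3), and then conclude from the vanishing $A^3_{(0)}(X)\cap A^3_{hom}(X)=0$. The only difference is that the paper simply cites \cite{SV} for this vanishing, whereas you flag it as the crux and sketch how to extract it from the Shen--Vial Fourier formalism (an isomorphism $A^3_{(0)}(X)\cong A^1(X)$ induced by $L_\ast$, combined with $A^1_{hom}(X)=0$); since precisely such a statement is established in \cite{SV} (where $A^3_{(0)}(X)=l\cdot A^1(X)$ with $L_\ast$ as inverse up to a scalar), your argument is sound and is essentially the paper's.
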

   
  \begin{proof} We know from proposition \ref{cc} that each $Y_i$ is in $A^2_{(0)}(X)$. Since $D_i\in A^1(X)=A^1_{(0)}(X)$, it follows that $a$ is in $A^3_{(0)}(X)$.
  But we know \cite{SV} that 
    \[A^3_{(0)}(X)\cap A^3_{hom}(X)=0\ .\]
  \end{proof}

  \begin{corollary}\label{cc3} Let $Z\subset\PP^5(\C)$ be a very general smooth cubic fourfold, and let $X$ be the Fano variety of lines in $Z$.   
  Let $\phi\colon X\dashrightarrow X$ be the degree $16$ rational map defined in \cite{V21}. Let $a\in A^2(X)$ be a $2$--cycle of the form
    \[ a = \phi^\ast(b)-4b\ \ \ \in A^2(X)\ ,\]
    where $b$ is a linear combination of Lagrangian constant cycle subvarieties and intersections of $2$ divisors. Then
    $a$ is rationally trivial if and only if $a$ is homologically trivial.
\end{corollary}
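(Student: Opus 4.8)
The plan is to reduce Corollary \ref{cc3} to the already-established injectivity result for the subring generated by Lagrangian constant cycle subvarieties and divisors. The hypothesis is that $b$ is a linear combination of Lagrangian constant cycle subvarieties and products of two divisors, so by Proposition \ref{cc} (and the fact that divisors lie in $A^1_{(0)}(X)$ together with the bigraded ring structure of Theorem \ref{svfour}(\rom3)) I would first observe that $b\in A^2_{(0)}(X)$.

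The crucial input I would need is the behaviour of the rational map $\phi$ on the Fourier decomposition. The statement involves $\phi^\ast(b)-4b$, and the coefficient $4$ is precisely what one expects if $\phi^\ast$ acts on $A^2_{(2)}(X)$ by multiplication by $4$ (the local degree) while acting differently on $A^2_{(0)}(X)$; more to the point, the combination $\phi^\ast-4\,\ide$ should annihilate the cohomology class of elements of $A^2_{(0)}(X)$, or at least land its image in a controlled piece. So the key step is to show that for $b\in A^2_{(0)}(X)$ the cycle $a=\phi^\ast(b)-4b$ again lies in $A^2_{(0)}(X)$: this amounts to knowing that $\phi^\ast$ respects the Fourier grading, i.e. $\phi^\ast A^2_{(0)}(X)\subset A^2_{(0)}(X)$, which should follow from the compatibility of $\phi$ with the Beauville--Bogomolov class $L$ established in \cite{V21}, or alternatively from the fact that $\phi^\ast$ commutes with the Chow--K\"unneth projectors $\Pi^X$ of Theorem \ref{svfour}(\rom1).

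Once I know $a\in A^2_{(0)}(X)$, the conclusion is immediate: by definition of the Fourier decomposition, $A^2_{(0)}(X)=(\Pi^X_4)_\ast A^0(X)$ is a subquotient controlled by $H^0$ and $H^{2,2}$-type classes on which the cycle class map is injective; concretely $A^2_{(0)}(X)\cap A^2_{hom}(X)$ is known to vanish for $2$-cycles on this fourfold (unlike the genuinely troublesome $A^4_{(0)}(X)\cap A^4_{hom}(X)$ question for $0$-cycles, the codimension-$2$ statement is accessible, since $A^2_{(0)}(X)$ injects into $H^4(X)$ by the Shen--Vial analysis). Therefore $a$ homologically trivial forces $a=0$ in $A^2(X)$, and the reverse implication is trivial.

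The main obstacle I anticipate is the second step, namely verifying that $\phi^\ast$ preserves $A^2_{(0)}(X)$ and that the specific combination $\phi^\ast(b)-4b$ does not leak into $A^2_{(2)}(X)$. This requires understanding the action of the degree-$16$ self-map $\phi$ of \cite{V21} on the Fourier decomposition, which is where the coefficient $4$ must be justified: one expects $\phi^\ast$ to act as an eigenoperator on the Fourier-graded pieces, and I would need the identity $(\phi^\ast-4\,\ide)A^2_{(0)}(X)\subset A^2_{(0)}(X)$ to hold, ideally with the image in fact homologically trivial so that the injectivity of $A^2_{(0)}(X)$ into cohomology closes the argument cleanly. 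Establishing this eigenvalue behaviour — presumably by tracking how $\phi$ interacts with the cycle $L$ and the tautological class $c$ of Lemma \ref{lem} — is the technical heart of the corollary; everything else is a formal consequence of Proposition \ref{cc} and the ring structure of Theorem \ref{svfour}.
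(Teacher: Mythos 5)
Your first step ($b\in A^2_{(0)}(X)$, via proposition \ref{cc} and the bigraded ring structure) matches the paper, but your concluding step rests on a claim that is not known to be true: you assert that $A^2_{(0)}(X)\cap A^2_{hom}(X)=0$ is ``known'' for $2$--cycles because ``$A^2_{(0)}(X)$ injects into $H^4(X)$ by the Shen--Vial analysis.'' This injectivity is precisely the open problem that the paper calls the ``troublesome part'': $A^2_{(0)}(X)\cap A^2_{hom}(X)$ is conjecturally zero (it would follow from Bloch--Beilinson) but not provably so, and the paper emphasizes this repeatedly --- it is exactly why proposition \ref{cc} does not settle conjecture \ref{conj2}(\rom2), and why corollary \ref{cc3} is stated for the peculiar combination $\phi^\ast(b)-4b$ rather than for $b$ itself. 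If your step were available, one could forget $\phi$ entirely and conclude directly from $b\in A^2_{(0)}(X)$; the corollary would then have no content beyond proposition \ref{cc}.

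The missing idea is the eigenspace decomposition of Shen--Vial for Voisin's self-map. Setting $V^2_\lambda:=\{\alpha\in A^2(X)\ \vert\ \phi^\ast(\alpha)=\lambda\cdot\alpha\}$, one has
\[ A^2_{(0)}(X)=V^2_{31}\oplus V^2_{-14}\oplus V^2_{4} \]
by \cite[Theorem 21.9]{SV}, and by \cite[Lemma 21.12]{SV} the troublesome part $A^2_{(0)}(X)\cap A^2_{hom}(X)$ is contained in $V^2_4$. Thus the operator $\phi^\ast-4\,\ide$ annihilates exactly the subspace on which injectivity into cohomology could fail: $a=\phi^\ast(b)-4b$ lies in $V^2_{31}\oplus V^2_{-14}$, and this subspace meets $A^2_{hom}(X)$ trivially, since any homologically trivial element of it lies in $A^2_{(0)}(X)\cap A^2_{hom}(X)\subset V^2_4$, and $V^2_4\cap(V^2_{31}\oplus V^2_{-14})=0$ because the sum is direct. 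This also corrects your reading of the coefficient $4$: it is not the ``local degree'' of $\phi$ acting on $A^2_{(2)}(X)$, but the $\phi^\ast$--eigenvalue on the piece $V^2_4\subset A^2_{(0)}(X)$ that houses the troublesome part. (Your intermediate observation that $\phi^\ast$ preserves $A^2_{(0)}(X)$ is correct and follows from the displayed decomposition, but by itself it proves nothing: one must kill $V^2_4$, not merely stay inside $A^2_{(0)}(X)$.)
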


\begin{proof} We know from proposition \ref{cc} that $b$ is in $A^2_{(0)}(X)$. 
Let $V^2_\lambda$ denote the eigenspace 
    \[ V^2_\lambda:=\{ \alpha\in A^2(X)\ \vert\ \phi^\ast(\alpha)=\lambda\cdot \alpha\}\ .\] 
Shen--Vial have proven that there is a decomposition
  \[ A^2_{(0)}(X)=V^2_{31}\oplus V^2_{-14}\oplus V^2_{4}\ \]
    \cite[Theorem 21.9]{SV}. The ``troublesome part'' $A^2_{(0)}(X)\cap A^2_{hom}(X)$ is contained in $V^2_4$ \cite[Lemma 21.12]{SV}. This implies that
   \[ (\phi^\ast-4(\Delta_X)^\ast)A^2_{(0)}(X)=V^2_{31}\oplus V^2_{-14} \]
 injects into cohomology.  
       \end{proof}

\vskip1cm
\begin{nonumberingt} 
Thanks to Jiji and Baba for kindly receiving me in quiet Sayama, where this note started its life. Thanks to Charles Vial for helpful email exchanges.
\end{nonumberingt}

\vskip1cm

\end{document}